\documentclass[12pt,A4paper]{article}

\usepackage[left=1in, right=1in, top=1in, bottom=1in]{geometry}
\usepackage{labelfig}
\usepackage{epsfig}
\usepackage{epstopdf}
\usepackage{soul}
\usepackage{xfrac}

\usepackage[percent]{overpic}

\usepackage{color}
\usepackage{amsthm,amsmath,amssymb}
\usepackage{booktabs}
\usepackage{mathpazo}
\usepackage{microtype}
\usepackage{overpic}
\usepackage{bm}
\usepackage{sectsty}
\usepackage[
	pdftitle={PDFTitle},
	pdfauthor={Hugo Parlier},
	ocgcolorlinks,
	linkcolor=linkred,
	citecolor=linkred,
	urlcolor=linkblue]
{hyperref}

\definecolor{linkred}{RGB}{199,21,133}
\definecolor{linkblue}{RGB}{16, 78, 139}
\definecolor{darkgreen}{RGB}{60, 140, 60} 

\usepackage[hang,flushmargin]{footmisc}
\usepackage{enumitem}
\usepackage{titlesec}
	\titlespacing{\section}{0pt}{12pt}{0pt}
	\titlespacing{\subsection}{0pt}{6pt}{0pt}
	
\titlelabel{\thetitle.\quad}

\makeatother 

\theoremstyle{plain}
\newtheorem{theorem}{Theorem}[section]

\newtheorem{lemma}[theorem]{Lemma}
\newtheorem{corollary}[theorem]{Corollary}

\theoremstyle{definition}

\newtheorem{remark}[theorem]{Remark}

\newcommand{\sys}{{\rm sys}}
\newcommand{\Hyp}{{\mathbb H}}
\newcommand{\arcsinh}{{\,\rm arcsinh}}
\newcommand{\arccosh}{{\,\rm arccosh}}
\newcommand{\dist}{\mathrm{dist}}

\sectionfont{\large \bfseries}
\subsectionfont{\normalsize}

\long\def\symbolfootnote[#1]#2{\begingroup%
\def\thefootnote{\fnsymbol{footnote}}\footnote[#1]{#2}\endgroup}

\def\blfootnote{\xdef\@thefnmark{}\@footnotetext}

\usepackage{mathtools}

\usepackage[sort,nocompress]{cite}

\begin{document}

{\Large \bfseries Finding  the Birman-Series set}

{\large Peter Buser and Hugo Parlier\symbolfootnote[1]{\small Supported by ANR-SNF Grant number 200021E\_238147
(SUGAR).\\
{\em 2020 Mathematics Subject Classification:} Primary: 32G15. Secondary: 57K20, 30F60. \\
{\em Key words and phrases:} hyperbolic surfaces, simple closed geodesics, Birman-Series, moduli spaces}
}

\vspace{0.2cm}
{\bf Abstract.}
By a celebrated result of Birman and Series, the set of simple complete geodesics form a nowhere dense subset of a hyperbolic surface. We show that a segment of finite length is in a (small) neighborhood of this set if it can be "extended simply" in both directions for long enough. We deduce that, for large enough genus $g$, there exist surfaces with a Birman-Series set that is roughly $\frac{1}{g^{\sfrac{2}{3}}}$-dense.

\section{Introduction} \label{sec:intro}

The study of closed geodesics on hyperbolic surfaces takes many forms, and relates topics such as dynamics, number theory and geometric group theory, while being fundamentally geometric objects as orbits in the geodesic flow. From a topological viewpoint, simple closed curves play a fundamental role in the understanding of mapping class groups, while their geodesic counterparts help decipher moduli space and friends.

Birman and Series \cite{Birman-Series} discovered a striking geometric property for the set of all simple closed geodesics, and more generally of all simple complete geodesics: they are nowhere dense on a surface and have Hausdorff dimension $1$. This so-called Birman-Series set is in striking contrast to the set of {\it all} closed geodesics, which is dense on both the surface and on its unit tangent bundle. One of the main ingredients which makes the Birman-Series theorem work is that there are "few" simple geodesics, who grow polynomially in length versus exponentially for closed geodesics in general \cite{Huber}. The polynomial growth phenomenon was later famously shown to have an asymptotic growth formula by Mirzakhani \cite{Mirzakhani2008}. So, by any measure, simple geodesics are rare, and, following the philosophy of what is seldom is wonderful, we prove the following.

\begin{theorem}\label{thm:main}
Consider a hyperbolic surface $X$ and $\frac{1}{10}>\varepsilon >0$. If $p\in X$ is the midpoint of a simple geodesic arc of length 
$$
2 \log\left(\frac{1}{\varepsilon} \right) +20
$$
then $p$ is at distance at most $\varepsilon$ from a geodesic lamination. 
\end{theorem}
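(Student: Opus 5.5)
Write the simple arc as $\gamma:[-L,L]\to X$ with $\gamma(0)=p$ and $L=\log(1/\varepsilon)+10$. The plan is to produce a simple \emph{complete} geodesic whose lift passes close to the lift of $\gamma$ near $p$. Its closure is then a geodesic lamination, which would finish the proof.

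The first step is to close up or extend $\gamma$ simply. Fix a lift $\tilde\gamma$ in $\Hyp$ with endpoints $\tilde\gamma(\pm L)$. I would build a sequence of simple geodesic objects $\alpha_n$, each containing a long subarc that fellow-travels $\tilde\gamma$. The natural candidate is as follows. Consider the set $\mathcal{S}$ of all simple geodesic arcs (or simple broken arcs with tiny angles) that extend $\gamma$ on both sides. Take a maximal such extension, or take limits of longer and longer extensions. Concretely, I would extend $\gamma$ geodesically in both directions until it first hits itself. If it never does on one side, that side is a simple geodesic ray. If it does, close up at the first self-intersection: the arc from the intersection point back along $\gamma$ forms a simple loop, and the loop can be tightened to a simple closed geodesic $\delta$ (or the boundary of a neighborhood, handling the one-sided/figure-eight situation by taking a boundary component of a regular neighborhood of $\gamma\cup$ loop). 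By standard hyperbolic geometry, the tightened geodesic stays close to the long part of $\gamma$, except near the junction points. In every case the output is a simple complete geodesic (closed, a ray-ray geodesic, or a geodesic spiraling onto closed geodesics). These all lie in laminations.

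The key quantitative step is the fellow-traveling estimate. Suppose a geodesic in $\Hyp$ has endpoints within bounded distance $C$ (a universal constant coming from the tightening, e.g. collar or thin-triangle constants) of $\tilde\gamma(-L)$ and $\tilde\gamma(L)$. Then its distance to the midpoint $\tilde\gamma(0)$ is at most about $C' e^{-L}$. The reason is that the distance from the midpoint of a segment to a geodesic whose endpoints are displaced by $C$ decays like $\cosh$-ratios: for a quadrilateral with long sides of length $2L$, the displacement at the middle is $\lesssim \sinh(C)/\cosh(L)$. With $L=\log(1/\varepsilon)+10$ this gives $2\sinh(C)e^{-10}\varepsilon\le\varepsilon$, provided $C$ is at most a few units. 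The $+20$ in the statement is exactly the room for this constant.

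I expect the main obstacle to be controlling the tightening step uniformly. When $\gamma$ self-approaches at a very small angle, or when the extension winds in a thin collar, the straightened simple geodesic must still enter and exit within bounded distance of $\gamma(\pm L)$. The natural thing to try first is the following. Instead of extending, form the simple closed curve (or arc system) obtained by joining $\gamma(L)$ back to $\gamma(-L)$ through the shortest path that avoids $\gamma$, or by running along the boundary of a regular neighborhood of $\gamma$. Then argue via the standard fact that a piecewise geodesic with long segments and exterior angles bounded away from $\pi$ is a quasi-geodesic close to its geodesic representative. The angle bound can be forced by choosing the boundary-of-neighborhood construction, which turns corners at nearly right angles. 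Simplicity is preserved because boundaries of regular neighborhoods are simple and geodesic tightening preserves simplicity. Degenerate cases, where the curve is peripheral or null-homotopic, must be excluded separately. Such a case would force $\gamma$ into a disk or a cusp/collar region; there it can be handled directly with the collar lemma, using the simple closed geodesic or cusp horocycle geometry to produce a nearby simple geodesic.
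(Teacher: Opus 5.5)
\textbf{There is a genuine gap.} Your skeleton matches the paper's: run to the first self-intersections, produce a simple closed or spiralling geodesic, then use exponential convergence across a long segment. But the step that carries all the content is asserted rather than proved, and as stated it is false. There is no universal $C$ such that the tightened curve passes within $C$ of $\tilde\gamma(\pm L)$.

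\emph{Small corner angles.} The loop you close up has corner angle $\psi$ equal to one of the two sector angles at the crossing. Nothing prevents $\psi$ from being arbitrarily small: the extension can return almost antiparallel to $\gamma$, e.g.\ after a hairpin around a thin collar. For a geodesic loop of length $\ell$ and corner angle $\psi$, the closed geodesic in its class has length $T$ with $\cosh(T/2)=\cosh(\ell/2)\sin(\psi/2)$. Moreover, the lift of the loop stays at distance at least $h$ from the axis, where $\sinh h\,\sinh(T/2)=\cos(\psi/2)$ (trirectangle formulas). So for small $\psi$ and a short core, the tightened geodesic need not come anywhere near $p$.

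\emph{Corners near $p$.} Even with good angles, closing up only at the forward crossing fails. The point $\gamma(s)$ hit at time $t_+$ may lie right next to $p$, which puts a corner of the lifted loop next to $\tilde p$.

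\emph{What is actually needed.} You need a target whose lift is a broken geodesic in which $\tilde p$ lies on a single side, at distance at least $\ell_\varepsilon+\ell_0$ from both ends of that side. All other sides must have length at least $\ell_0$ and all angles must be at least a fixed $\theta_0$ (the paper uses $\pi/8$). Only then do Lemma~\ref{lem:outside}, or your decay estimate, apply. Your proposed remedy does not supply this. A boundary curve of a regular neighbourhood of the spine, pulled tight along $\Gamma$, turns at each crossing by one of the sector angles there. Those angles need not be anywhere near $\pi/2$.

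\emph{The missing mechanism.} The missing idea is a way to force these angle and length bounds, and this is where the paper's work lies. It observes that $\Gamma$ (from the first backward to the first forward self-intersection) is a spine of an embedded pair of pants or one-holed torus. It then lists the finitely many configurations (Cases I--V), together with which sector at each crossing is $\geq\pi/2$ (sometimes $\geq 3\pi/4$). In each case it assembles the target from three devices:
\begin{itemize}
\item the \emph{loop trick}: drop the perpendicular from the base point of a loop to its core geodesic. This halves a large angle, and following the loop once and then spiralling onto the core gives corner angles $\geq\pi/4$ or $\geq\pi/8$;
\item the \emph{insertion trick}: if the arc of $\Gamma$ between the two crossings is shorter than $\ell_0$, use instead an arc of length $\ell_0$ cutting off an embedded triangle. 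The triangle's angle sum $<\pi$ keeps the new angles large;
\item the \emph{angle trick}: a triangle with one angle $\geq 3\pi/4$ has its other two angles summing to less than $\pi/4$, which forces a loop's base angle above $\pi/2$.
\end{itemize}
The resulting target is often not a closed geodesic at all. It is frequently a bi-infinite geodesic spiralling onto one or two core geodesics, chosen according to which sectors are large. This choice, not tightening, is what makes the fellow-travelling estimate available.

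Separately, your alternative of taking maximal or limiting simple extensions of $\gamma$ gives nothing. The geodesic extension of $\gamma$ is unique and generically not simple, and ``broken arcs with tiny angles'' reintroduces exactly the uncontrolled corners above.
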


The non-density of the Birman-Series set means that, for small enough $\varepsilon$, there are regions of the surface in which all points are $\varepsilon$ away from all complete simple geodesics. The above result says that if a point lies in the middle of a long enough simple geodesic arc, it is in the $\varepsilon$-neighborhood of the Birman-Series set of $X$. The result applies just as well to geodesic segments: any small geodesic segment which is the subset of a quantifiably long embedded geodesic segment  is close to a geodesic lamination. This second way of framing the result can be thought of as a result about the unit tangent bundle. We preferred the above formulation for simplicity. 

We have made little effort to optimize the constants in Theorem \ref{thm:main}, but they have the advantage of being very concrete and rather simple. In previous work \cite{Buser-Parlier}, we proved lower bounds, which only depend on the topology of the surface, on the radius of disks of a size which are disjoint from the Birman-Series set, but the constants, while explicit, are somewhat more involved. As a consequence of Theorem \ref{thm:main}, we obtain the following upper bounds.
\begin{corollary}\label{cor:bs} 
There exist a sequence of surfaces of genus $g_k\to \infty$ with a Birman-Series set that is roughly $\frac{1}{g_{k}^{\sfrac{2}{3}}}$-dense. 
\end{corollary}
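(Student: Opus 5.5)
The plan is to apply Theorem \ref{thm:main} on a sequence of surfaces whose geometry is controlled well enough that \emph{every} point is the midpoint of a simple geodesic arc of length roughly $2\log(1/\varepsilon)+20$, with $\varepsilon \asymp g^{-2/3}$. Such an arc has length about $\frac{4}{3}\log g$. So the corollary reduces to a purely geometric statement:

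\emph{there exist surfaces $X_{g_k}$ of genus $g_k\to\infty$ such that through every point, in suitable directions, there passes an embedded geodesic arc of length $L_k\geq \frac{4}{3}\log g_k + C$, centered at that point.}

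For the construction I would use surfaces with large injectivity radius. By Buser--Sarnak, or by random surfaces or congruence covers, there are closed surfaces $X_{g_k}$ with systole, and hence injectivity radius, at least $\frac{4}{3}\log g_k - c$ for a universal constant $c$. The exponent $\frac{4}{3}$ is exactly what produces the exponent $\frac{2}{3}$ in the corollary, which strongly suggests this is the intended family.

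On such a surface, take any $p$ and any unit tangent direction at $p$. The geodesic segment of length $2r$ centered at $p$, with $r$ less than half the injectivity radius, lies in the embedded ball of radius $r$ around $p$. It is therefore a diameter of an embedded hyperbolic disk and is simple. More carefully, any geodesic arc of length less than the systole is simple: a self-intersection would create a geodesic loop of length less than the arc, which is homotopically nontrivial and hence at least as long as the systole. This second argument gives simple arcs of length close to $\sys(X_{g_k})\approx \frac{4}{3}\log g_k$, which is more than the disk argument.

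Next I match the parameters. We need $2\log(1/\varepsilon)+20 \le \sys(X_{g_k}) - \delta$, which allows
$$
\varepsilon = e^{-(\sys(X_{g_k})-20-\delta)/2} \asymp e^{-\frac{2}{3}\log g_k} = g_k^{-2/3},
$$
up to a multiplicative constant; this is what "roughly" means in the statement. For large $g_k$ this also gives $\varepsilon<\frac{1}{10}$. Theorem \ref{thm:main} then places every point within $\varepsilon$ of a geodesic lamination.

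Finally, I pass from laminations to the Birman--Series set. A geodesic lamination is a closed union of simple complete geodesics, so every point of a lamination already lies on a simple complete geodesic. Hence $p$ lies within $\varepsilon$ of the Birman--Series set, and the set is $O(g_k^{-2/3})$-dense.

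The main obstacle is the existence of surfaces with systole $\geq (\frac{4}{3}-o(1))\log g$, which I would cite (Buser--Sarnak via congruence subgroups of arithmetic groups, or Katz--Schaps--Vishne) rather than reprove. The other point needing care is the simplicity claim: a self-intersecting arc of length $\ell$ yields a closed loop of length less than $\ell$. That loop is geodesic except at one corner, so it cannot be nullhomotopic, since a hyperbolic monogon is impossible. It is therefore homotopic to a closed geodesic that is no longer than the loop, and its length is at least the systole.
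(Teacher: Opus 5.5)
Your proposal is correct and follows the paper's proof: take closed surfaces with $\sys(X_{g_k})\gtrsim \frac{4}{3}\log g_k$ (the paper cites Katz--Schaps--Vishne), note that every point is the midpoint of a simple geodesic arc of length nearly $\sys(X_{g_k})$, and solve $2\log(1/\varepsilon)+20\le \sys(X_{g_k})$ to get $\varepsilon\approx e^{10}g_k^{-2/3}$; your shortest-loop argument replaces the paper's ``injectivity radius $\ge \sys/2$'' observation, and your remark that leaves of a lamination are simple complete geodesics spells out a step the paper leaves implicit. Two slips that do not affect the argument: the injectivity radius is only guaranteed to be $\ge \sys/2$ (not $\ge \sys$), so the disk argument should use $r<\sys/2$, which already gives arcs of length up to $\sys$, the same as your loop argument; and only the arithmetic constructions (Buser--Sarnak, Katz--Schaps--Vishne) are known to reach the constant $\frac{4}{3}$, not random surfaces.
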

The statement above is a slight simplification of Corollary \ref{cor:dense} in the text, which provides an exact estimate. The corollary is a direct application of Theorem \ref{thm:main} to surfaces with large systole, with growth on the order of $\frac{4}{3} \log(g)$ first exhibited in \cite{Buser-Sarnak}.

The idea of the proof of Theorem \ref{thm:main} is as follows. Given a random point in the unit tangent bundle, exponentiating far enough in both directions will produce a non-simple geodesic arc. Of course, this isn't the case if the original tangent vector belongs to the Birman-Series set, but this a rare occurrence and constitutes a measure zero set in the unit tangent bundle. We examine the geometry at the first points of self-intersection, and build the lamination based on the local geometry. This phenomenon is very similar to one used to prove Luo-Tan's beautiful identity \cite{Luo-Tan}, although we exploit it quite differently.

Finally, we note that the original Birman-Series result has been generalized in different contexts, namely by Sapir to geodesics that have bounded intersection in terms of their length \cite{Sapir}, and by Lenzhen-Souto \cite{LenzhenSouto} to complete non-filling geodesics. In principle, the ideas from Theorem \ref{thm:main} could also be applied in different contexts, but additional difficulties appear as soon as you allow self-intersections. It definitely provides an interesting direction in which to explore. 
 
\noindent {\bf Acknowledgements.}

The authors are grateful to the Bernoulli Center at the EPFL for hosting inspiring events where some of our mathematical discussions took place.  Finally, as it is now standard, we declare that we didn't use AI in any way or form, despite the proofs involving a somewhat detailed case by case analysis which we did the old-fashioned way. 

\section{Preliminaries}

\subsection{Definitions and notations}
Throughout $X$ is an orientable surface, endowed with a geodesically complete hyperbolic metric of finite area. While some of the results make sense in a more general context, this is the framework of interest. However, Corollary \ref{cor:bs} exhibits a sequence of closed surfaces of genus $g \geq 2$. 

We are interested in the Birman-Series set of such a hyperbolic surface, that is the set of all simple complete geodesics on $X$. Birman and Series \cite{Birman-Series} showed that the set of all simple complete geodesics is nowhere dense on $X$  and has Hausdorff dimension $1$. This result was a cornerstone for many subsequent results on the study of simple closed geodesics on a surface (which are of course contained in the Birman-Series set). 

\subsection{Setup}

Here we present three lemmas about broken geodesics in the hyperbolic plane $\Hyp$.  We look at connected piecewise geodesic paths which we extend in both directions. The lemmas provide a sufficient angle-segment-length condition for them to be embedded and to be close to a given geodesic. By angles of a piecewise geodesic we mean the smallest angles in the intersection points, that is the complementary angle to the "turning" oriented angle at singular points. So angles at least $\theta_0$ means that the (oriented) angle has (absolute) value at most $\pi - \theta_0$ (see Figure \ref{fig:broken} for a depiction of the turning angles). 

 \begin{figure}[h]
\begin{center}
\leavevmode
\SetLabels
\endSetLabels
\AffixLabels{
\epsfig{file =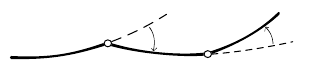,width=6cm,angle=0}}
\vspace{-24pt}
\end{center}
\caption{Turning angles of a broken geodesic} \label{fig:broken}
\end{figure}

The first lemma applies to both finite and infinite length paths. 

\begin{lemma}\label{lem:broken}
Let $\theta_0 \in ]0,\pi[$. Then any piecewise geodesic with angles at least $\theta_0$ and lengths of segments at least 
\begin{equation*}
\ell(\theta_0) = 2 \arccosh\left( \frac{1}{\sin\left(\frac{\theta_0}{2}\right)}\right)
\end{equation*}
remains embedded. If it is infinite in both directions, then it has two distinct endpoints on $\partial \Hyp$, except in the case where all angles equal $\theta_0$ and all segments have length exactly $\ell_0$.
\end{lemma}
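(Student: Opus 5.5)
The plan is to cut the broken geodesic into pieces by geodesic lines perpendicular to its segments. Let the path be $\gamma$, with vertices $v_i$ and segments $s_i=[v_i,v_{i+1}]$. For each segment $s_i$, let $L_i$ be the complete geodesic perpendicular to $s_i$ through its midpoint $m_i$. The piece $\gamma_i$ of $\gamma$ between $m_{i-1}$ and $m_i$ consists of two geodesic segments, each of length at least $\ell(\theta_0)/2$, meeting at $v_i$ with angle at least $\theta_0$.

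\textbf{Step 1 (local computation).} The key claim is that $L_{i-1}$ and $L_i$ are disjoint or asymptotic. Place $v_i$ at the origin and consider the bisector $b$ of the angle at $v_i$. A geodesic perpendicular to one side of the angle, at distance $d$ from the vertex, is asymptotic to $b$ exactly when the angle of parallelism satisfies $\Pi(d)=\theta/2$, that is when $\cosh d = 1/\sin(\theta/2)$. Now take $\theta\ge\theta_0$ and $d\ge \ell(\theta_0)/2$, so that $\cosh d \ge 1/\sin(\theta_0/2)\ge 1/\sin(\theta/2)$. Then each of $L_{i-1}$, $L_i$ stays in the closed half-plane on its own side of $b$, and they can meet at most at an ideal point of $b$. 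Equality, meaning that both are asymptotic to $b$ at the same ideal point, happens only if $\theta=\theta_0$ and both half-segments have length exactly $\ell(\theta_0)/2$. When the sides are unequal, only the inequality for each side separately is needed, and that is what the hypothesis provides.

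\textbf{Step 2 (embeddedness).} Let $H_i$ be the closed half-plane bounded by $L_i$ that contains the forward part of $\gamma$ from $m_i$. I will show that $\gamma_i$ lies in the strip $H_{i-1}\setminus \mathrm{int}(H_i)$. This holds because each half of $\gamma_i$ is a geodesic segment orthogonal to one boundary line, and by Step 1 it does not cross the other line. Moreover $H_{i}\subset H_{i-1}$, since the lines are disjoint and $\gamma_i$ joins them. The strips therefore have pairwise disjoint interiors, which makes the whole path embedded. The same argument applies verbatim to finite paths, where the end pieces are simply truncated.

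\textbf{Step 3 (endpoints).} For a bi-infinite path, the forward endpoints of the $L_i$ give nested closed arcs $I_0\supset I_1\supset\cdots$ of $\partial\Hyp$. The backward ones give nested arcs on the other side of $L_0$. I will show that $\gamma$ leaves every compact set: each $L_i$ is at distance at least $\sum_{j<i}$ of a positive quantity from $m_0$. Since the distance from $m_{i-1}$ to $L_i$ along $\gamma_i$ is at least $\ell(\theta_0)/2$, and the curve crosses strips, the path cannot accumulate. Hence the arcs $I_i$ shrink to a single point $\xi^+$, and similarly to $\xi^-$.

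The two endpoints lie in the closed arcs cut off by $L_0$ on opposite sides, so they are distinct unless both equal a common ideal endpoint of $L_0$. That would force all the $L_i$ to share this ideal point. By the equality analysis in Step 1, this happens only when every angle equals $\theta_0$ and every segment has length exactly $\ell(\theta_0)$. This is the exceptional, horocyclic-like case.

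\textbf{Main obstacle.} The delicate point is Step 3. Nested disjoint lines with gaps tending to $0$ could a priori accumulate on a limiting line rather than shrink to a point. I would first try to rule this out using the uniform lower bound $\ell(\theta_0)/2$ on the length of each half-segment inside a strip, together with a comparison showing that a strip of width $\to 0$ cannot contain such a long orthogonal-then-turning path. If that fails, I would instead compare $\gamma$ with horocycles tangent at the limiting ideal point.
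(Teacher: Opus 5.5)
Your Steps 1 and 2 are correct and are essentially the paper's argument. The paper also derives $\cosh(\ell/2)\sin(\theta/2)\ge 1$ for each side-bisector and the adjacent angle-bisector, and concludes that these lines cut $\Hyp$ into disjoint strips. You use $b_i$ only as a separator between $L_{i-1}$ and $L_i$, which gives coarser but equivalent strips. Your distinctness argument is also fine once the limits exist: if $\xi^+=\xi^-$, this point lies in $I_i$ and in the closure of its complement for every $i$. It is therefore an endpoint of every $L_i$, which forces the equality case at every vertex.

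The paper says no more than ``the lemma follows''. Where you go further, in Step 3, there is a genuine gap: the existence of $\xi^\pm$ is not established. The mechanism you propose does not work, because the gaps between consecutive lines are not bounded below. Indeed $\cosh d(L_{i-1},b_i)=\cosh(\ell_{i-1}/2)\sin(\theta_i/2)$ can be arbitrarily close to $1$. For parameters tending quickly to the threshold, the distances $d(L_{i-1},L_i)$ are then summable, and they vanish in the exceptional case. So no divergent lower bound $\sum_{j<i}(\cdot)$ for $d(m_0,L_i)$ comes out. The bound $\ell(\theta_0)/2$ is a length along $\gamma_i$, not a distance between lines. Moreover, ``the path does not accumulate'' does not imply ``the $I_i$ shrink''. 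In the bad scenario $L_i\to L_\infty$, the $m_i$ would escape towards an ideal endpoint of $L_\infty$, where the strips are wide, so your thin-strip comparison does not apply there either.

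Here is one way to close the gap.
\begin{itemize}
\item Suppose $\bigcap_i I_i$ has nonempty interior, and put one of its interior points at $\infty$ in the upper half-plane.
\item Then each $L_i$ is a Euclidean semicircle with centre $c_i$ and radius $\rho_i$, $H_i$ is its outside, and $v_i$ lies strictly inside.
\item Since $L_i$ is the perpendicular bisector of $[v_i,v_{i+1}]$, the point $v_{i+1}$ is the inversion of $v_i$ in $L_i$. Hence $\mathrm{Im}\,v_{i+1}=\rho_i^2\,\mathrm{Im}\,v_i/|v_i-c_i|^2>\mathrm{Im}\,v_i$.
\item The forward path also lies inside the semicircle $L_\infty$, so it stays in a compact set $K\subset\Hyp$.
\item On the one hand, $d(m_{i-1},L_i)\ge d(m_{i-1},b_i)$, and $\sinh d(m_{i-1},b_i)=\sinh(\ell_{i-1}/2)\sin(\theta_i/2)\ge\tanh(\ell(\theta_0)/2)$.
\item On the other hand, $d(m_{i-1},L_i)\le d(m_{i-1},L_\infty)\to 0$. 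This holds because $L_i$ separates $m_{i-1}\in L_{i-1}\cap K$ from $L_\infty$, and $L_{i-1}\to L_\infty$ uniformly on $K$.
\end{itemize}
This contradiction shows that $\bigcap_i I_i$ is a single point $\xi^+$. The closed half-planes $\overline{H_i}$ then shrink to $\xi^+$ in $\overline{\Hyp}$, so the forward path, which lies in $H_i$ after $m_i$, converges to $\xi^+$.
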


\begin{proof}
Consider, for each vertex $p$ of the polygon, the geodesic in the hyperbolic plane passing through $p$ that bisects the angle, and for each side of the polygon the orthogonal geodesic through the midpoint. For neighboring pairs of these geodesics, say bisecting angle $\theta$ and length $\ell$, we have
\begin{equation*}
\cosh(\frac{\ell}{2}) \sin\left(\frac{\theta}{2}\right) \geq \cosh\left(\frac{\ell(\theta_0)}{2}\right) \sin\left(\frac{\theta_0}{2}\right) = 1.
\end{equation*}
By the right-angled triangle formula the two intersect at most at infinity and the latter only if $\theta = \theta_0$ and $\ell = \ell_0$. Hence, the bisecting geodesics separate the hyperbolic plane into pairwise disjoint strips and the lemma follows.
\end{proof}

Our proofs will ultimately need the above lemma for $\theta_0 = \frac{\pi}{8}$, so we set 
$$
\ell_0:= 2 \arccosh\left( \frac{1}{\sin\left(\frac{\pi}{16}\right)}\right) =  4.63\hdots  < 5.
$$

The second lemma approximates the distance between a point that lies on a geodesic segment between two geodesics as in Figure \ref{fig:epsilon}.

 \begin{figure}[h]
\begin{center}
\leavevmode \SetLabels
\L(.035*.30) $A$\\%
\L(.92*.30) $B$\\%
\endSetLabels
\AffixLabels{
\epsfig{file =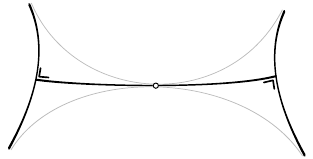,width=6cm,angle=0}}
\vspace{-24pt}
\end{center}
\caption{A point close to all geodesics going left to right} \label{fig:epsilon}
\end{figure}

\begin{lemma}\label{lem:epsilon}
Fix $\varepsilon>0$. Consider two disjoint geodesics, $A$ and $B$, and $p$ a point lying on the shortest path between $A$ and $B$ and such that $d(p,A)\geq d(p,B) \geq \ell_\varepsilon := \arcsinh \left( \frac{1}{\sinh(\varepsilon)}\right)$.

Now let $C$ be any geodesic which intersects $A$ and $B$. Then $d(p,C) < \varepsilon$. 
\end{lemma}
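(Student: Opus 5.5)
We work in the hyperbolic plane $\Hyp$ and reduce to a statement about a single geodesic. Let $\gamma$ be the common perpendicular of $A$ and $B$, meeting $A$ at $a$ and $B$ at $b$, with $p\in[a,b]$. Since $d(p,A)\geq d(p,B)\geq \ell_\varepsilon$, the point $p$ lies at distance at least $\ell_\varepsilon$ from both $A$ and $B$ along $\gamma$. Let $C$ be a geodesic meeting $A$ at $x$ and $B$ at $y$. Then the segment $[x,y]\subset C$, together with $[a,x]\subset A$, $[a,b]\subset\gamma$ and $[b,y]\subset B$, bounds a quadrilateral with two right angles at $a$ and $b$. If $C$ crosses $\gamma$, the claim is easy to check directly. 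The crossing point $q$ lies on $[a,b]$, and we will see below that $q$ is close to every point of $[a,b]$ far from the ends.

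The plan is to bound $d(p,C)$ by the distance from $p$ to the side $[x,y]$ of this quadrilateral. The key step compares $C$ with the two "extremal" geodesics that are asymptotic to $A$ and $B$, i.e. that share an endpoint at infinity with each. By convexity of the distance function to a geodesic, and since $C$ meets both $A$ and $B$, the geodesic $C$ separates $p$ from neither the endpoints of $A$ nor those of $B$ in a way that lets it escape further than such an extremal geodesic.

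More concretely, the cleanest route is an explicit computation. Drop the perpendicular from $p$ to $C$. Let $h=d(p,C)$, and let $d_A=d(p,A)\ge d_B=d(p,B)\ge\ell_\varepsilon$. Consider the line through $p$ orthogonal to $\gamma$. It splits the quadrilateral into two pieces, each a quadrilateral with three right angles (a Lambert quadrilateral) or a degenerate version of one. Let $P$ be the perpendicular to $\gamma$ at $p$. The geodesic $C$ must cross $P$, since $x$ and $y$ lie on opposite sides of $P$: $A$ and $B$ lie in opposite half-planes determined by $P$, because both are orthogonal to $\gamma$ at points on opposite sides of $p$. Let $z=C\cap P$ and $t=d(p,z)$. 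Then $d(p,C)\le t$, so it suffices to show $\sinh t<\sinh\varepsilon$, i.e. that $z$ is close to $p$.

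Now consider the quadrilateral with vertices $p$, $b$, $y$, $z$. It has right angles at $p$ and $b$, and the side $[p,b]$ has length $d_B\ge\ell_\varepsilon$. Because $C$ reaches $B$, the geodesic $C$ cannot be ultraparallel to $B$ on that side. The extremal case is when $C$ is asymptotic to $B$, i.e. $y$ is at infinity. That extremal configuration is a trirectangle with one ideal vertex, for which the standard formula gives $\sinh t\,\sinh d_B=1$. In general, the same trigonometry gives $\sinh t\,\sinh d_B<1$, because moving $y$ in from infinity only decreases $t$ for the fixed direction constraint. Hence $\sinh t<1/\sinh d_B\le 1/\sinh\ell_\varepsilon=\sinh\varepsilon$, and so $t<\varepsilon$. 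The same argument on the $A$ side gives the symmetric bound, but one side suffices.

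The main obstacle is justifying the monotonicity claim rigorously, since $C$ is a line through $z$ and its direction is free. I would argue via angles. Let $\phi$ be the angle at $z$ between $C$ and $P$ on the $B$ side. For $C$ to meet $B$, the ray of $C$ from $z$ toward $B$ must hit $B$. By the angle of parallelism, this requires $\phi$ to exceed the critical angle at which the ray becomes asymptotic to $B$. This critical configuration satisfies the relation above, so $C$ hitting $B$ forces $\sinh t\,\sinh d_B<1$. Because of the symmetric roles of $A$ and $B$, the constraint also holds on the other side. The trirectangle formula $\cosh(\text{dist})\ldots$ used here follows from the standard relation for Lambert quadrilaterals with an ideal vertex. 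I would verify it by placing $\gamma$ as the imaginary axis in the upper half plane.
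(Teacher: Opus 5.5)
\textbf{There is a genuine gap: the bound on $t$ uses $B$ alone, and that step is false.}

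Your setup is sound up to that point. $P$ does separate $A$ from $B$, so $C$ meets $P$ at some $z$ with $d(p,C)\le t=d(p,z)$. The ideal-vertex Lambert relation $\sinh t_0\,\sinh d=1$ is also exactly the trigonometric input the paper uses.

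The problem is the claim that $C$ meeting $B$ forces $\sinh t\,\sinh d_B<1$. For any $z\in P$, however far from $p$, the geodesic through $z$ and $b$ meets $B$. Your ``extremal configuration'' with $y$ at infinity is a quadrilateral with right angles at $p$ and $b$ and one ideal vertex. Its angle at $z$ is $\pi/2$ only if $C$ happens to be perpendicular to $P$; for other directions $t$ is unconstrained. Likewise, the angle-of-parallelism condition you invoke constrains the direction of $C$ at $z$ for a given $t$, not $t$ itself. So ``one side suffices'' is wrong, and the monotonicity you flag as the main obstacle does not hold as stated.

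\textbf{The missing idea is to use that $C$ meets \emph{both} $A$ and $B$.} The two rays of $C$ issuing from $z$ make supplementary angles with the ray from $z$ to $p$.
\begin{itemize}
\item So one of them, say the one that must reach $B$, makes an angle $\ge\pi/2$ with that ray. It therefore stays in the closed half-plane beyond the geodesic $L_z$ perpendicular to $P$ at $z$.
\item Suppose $\sinh t\,\sinh d_B\ge 1$. Then $L_z$ lies on or beyond the perpendicular to $P$ at distance $t_0$ from $p$, where $\sinh t_0\,\sinh d_B=1$, and that perpendicular is asymptotic to $B$.
\item Hence $B$ lies in the open half-plane of $L_z$ containing $p$, and that ray cannot reach $B$.
\end{itemize}
Applying this to whichever of $A$, $B$ receives the non-acute ray gives $\sinh t<1/\sinh\ell_\varepsilon=\sinh\varepsilon$, as required.

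This is the paper's argument in different clothing. The paper first replaces $A$ and $B$ by the perpendiculars to $\gamma$ at distance exactly $\ell_\varepsilon$ from $p$, which $C$ must also cross. The segment of $C$ between them then stays inside the ideal quadrilateral bounded by $A$, $B$ and the two limit geodesics joining their endpoints. The common perpendicular of those two limit geodesics is $P$, with midpoint $p$ and half-length $x$ satisfying $\sinh x\,\sinh\ell_\varepsilon=1$. So $C$ crosses $P$ within distance $x=\varepsilon$ of $p$.
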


\begin{proof}
By a variational argument, the extremal case to consider is when $p$ lies at distance exactly $\ell_\varepsilon$ from $A$ and $B$. Consider the two limit geodesics which form an ideal quadrilateral with $A$ and $B$. By symmetry, the shortest path between them passes through $p$. Denote the length of this path $2x$. By cutting along it we obtain a quadrilateral with two ideal points and two right angles. Now by cutting them along the shortest path between $A$ and $B$, we get four isometric quadrilaterals, each with an ideal point and three right angles (so-called Lambert quadrilaterals). The proof follows from standard hyperbolic trigonometry considerations in this quadrilateral. Indeed two of its adjacent sides are of length $x$ and $\ell_\varepsilon$ and hence satisfy $\sinh(x) \sinh(\ell_\varepsilon) = 1$. It follows that $x = \varepsilon$. 
\end{proof}

\begin{figure}[h]
\begin{center}
\leavevmode \SetLabels
\L(.028*.745) $\Delta$\\%
\L(.965*.32) $\Delta$\\%
\L(.07*.95) $p^{-}$\\%
\L(.965*.05) $p^{+}$\\%
\L(.50*.55) $p$\\%
\L(.004*.522) $a$\\%
\L(.972*.56) $b$\\%
\L(.06*.30) $A$\\%
\L(.87*.70) $B$\\%
\L(.07*.56) $\ell_0$\\%
\L(.89*.45) $\ell_0$\\%
\endSetLabels
\AffixLabels{
\epsfig{file =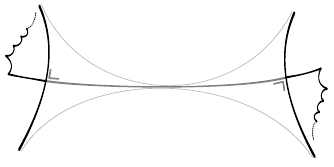,width=6.5cm,angle=0}}
\vspace{-24pt}
\end{center}
\caption{The configuration in Lemma \ref{lem:outside}} \label{fig:outside}
\end{figure}

\begin{lemma}\label{lem:outside}
Let $\Delta$ be a broken geodesic in the hyperbolic plane with ideal endpoints $p^{-}, p^{+} \in \partial \Hyp$, with all segments of lengths $\geq \ell_0$ and all angles $\geq \theta_0$. Let $p$ be a point on a ``large'' segment $ab$ of $\Delta$  with distances $ \dist(p,a) \geq \ell_{\varepsilon}+ \ell_0$, $dist(p,b)\geq \ell_{\varepsilon}+ \ell_0$. Then $p$ is $\varepsilon$-close to the geodesic that connects $p^{-}$ with $p^{+}$.
\end{lemma}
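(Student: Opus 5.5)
Let $\gamma$ denote the complete geodesic joining $p^-$ to $p^+$. The plan is to construct two disjoint geodesics $A$ and $B$ that are both crossed by $\gamma$ and that satisfy the hypotheses of Lemma \ref{lem:epsilon} relative to $p$; that lemma then gives $d(p,\gamma)<\varepsilon$. The natural candidates come from the orthogonal lines to the segment $ab$. Let $a'$ and $b'$ be the points of $ab$ at distance $\ell_0$ from $a$ and from $b$ respectively. Since $\dist(p,a)\geq \ell_\varepsilon+\ell_0$, the point $a'$ lies between $a$ and $p$, and likewise $b'$ lies between $p$ and $b$. Take $A$ to be the geodesic orthogonal to $ab$ through $a'$, and $B$ the geodesic orthogonal to $ab$ through $b'$. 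These are disjoint, because two geodesics perpendicular to a common segment do not meet. The segment $a'b'$ is the common perpendicular, so it is the shortest path between $A$ and $B$, and $p$ lies on it. Moreover $d(p,A)=\dist(p,a)-\ell_0\geq\ell_\varepsilon$ and $d(p,B)\geq \ell_\varepsilon$, so after possibly swapping the labels the hypotheses of Lemma \ref{lem:epsilon} hold.

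The remaining and main step is to show that $\gamma$ crosses both $A$ and $B$. Equivalently, $p^-$ and $p^+$ must lie on opposite sides of $A$, and also on opposite sides of $B$. Here I would reuse the strip decomposition from the proof of Lemma \ref{lem:broken}. Attach to each vertex of $\Delta$ its bisecting geodesic, and to each segment the perpendicular at its midpoint. Since all segments have length at least $\ell_0$ and all angles are at least $\theta_0$, neighbouring lines in this family are disjoint, so the family cuts $\Hyp$ into nested strips that $\Delta$ traverses in order. The ray of $\Delta$ going towards $p^-$ stays beyond the bisector at $a$ and converges to $p^-$, so $p^-$ lies on the far side of that bisector from $b$.

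So it suffices to show that the bisector $L_a$ at $a$ lies entirely on the $a$-side of $A$, that is, $L_a\cap A=\emptyset$. Then the half-plane beyond $L_a$, and in particular $p^-$, lies on the side of $A$ containing $a$. For the same reason $p^+$ lies beyond $B$, while $p^+$ is also beyond $L_a$ on the other side; together these give that $\gamma$ separates correctly and crosses both $A$ and $B$. To prove $L_a\cap A=\emptyset$, note that $L_a$ makes an angle of at least $\theta_0/2$ with the segment $ab$ at $a$, and $A$ is perpendicular to $ab$ at distance $\ell_0$ from $a$. The two are disjoint as soon as $\cosh(\ell_0)\sin(\theta_0/2)\geq 1$. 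This follows from the defining relation $\cosh(\ell_0/2)\sin(\theta_0/2)=1$ and the fact that $\cosh\ell_0\geq\cosh(\ell_0/2)$, which is exactly the right-angled triangle criterion already used in the proof of Lemma \ref{lem:broken}. The argument at $b$ is symmetric.

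I expect the main obstacle to be making the separation argument rigorous: checking that the ideal endpoint $p^-$ really lies in the half-plane cut off by $L_a$, even though $\Delta$ may turn in either direction at subsequent vertices. This is where the pairwise disjointness of the successive strips from Lemma \ref{lem:broken} is essential. The length budget $\ell_\varepsilon+\ell_0$ has slack, since $\ell_0$ is more than what the criterion needs, so the constants should close without difficulty.
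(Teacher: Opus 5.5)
Your proposal is correct. It has the same skeleton as the paper: the same perpendiculars $A$, $B$ at distance $\ell_0$ from $a$ and $b$, and the same final appeal to Lemma~\ref{lem:epsilon}. Where it differs is the key separation step.

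\textbf{The paper's route.} The paper uses Lemma~\ref{lem:broken} as a black box. It applies it to an auxiliary broken geodesic that runs from $p^-$ along $\Delta$ to $a$, then along $ab$ for length exactly $\ell_0$, then turns through a right angle onto $A$. This path satisfies the angle and segment-length hypotheses, so it is embedded, and that shows $A$ separates $p^-$ from $p$.

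\textbf{Your route.} You reopen the proof of Lemma~\ref{lem:broken}. Its nested strips put $p^-$ beyond the bisector $L_a$. You then check directly, with the right-triangle criterion, that $L_a$ misses $A$.

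\textbf{What each buys.} The paper's argument is shorter and needs only the statement of Lemma~\ref{lem:broken}. Yours is more explicit and shows where the slack is. It only needs $\cosh(d)\sin(\theta_0/2)>1$, where $d$ is the distance from $a$ to $A$, so any $d>\ell_0/2$ would suffice. The black-box argument needs $d\geq\ell_0$, because the inserted segment of length $d$ must meet the length hypothesis of Lemma~\ref{lem:broken} (the angle at $a$ can be as small as $\theta_0$).

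\textbf{One point to tighten.} State the inequality strictly: $\cosh(\ell_0)\sin(\theta_0/2)>\cosh(\ell_0/2)\sin(\theta_0/2)=1$. This makes $L_a$ and $A$ ultraparallel, not merely disjoint in $\Hyp$. Without it, $p^-$ could a priori be a common ideal endpoint of $L_a$ and $A$. Then $\gamma$ would be asymptotic to $A$ rather than intersecting it, and Lemma~\ref{lem:epsilon} would not apply. With strictness, the ideal boundary of the half-plane beyond $L_a$ lies strictly on the $a$-side of $A$, and your argument closes.
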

\begin{proof}
Consider the geodesics $A, B$ that intersect $ab$ orthogonally at distances $\ell_0$ from $a$ and $b$ respectively (Figure \ref{fig:outside}). By Lemma \ref{lem:broken} the polygon that goes from $p^{-}$ along $\Delta$ to $a$, then along $ab$ to $A$ and finally along $A$ to $\partial \Hyp$ is embedded. Hence, $p^{-}$ and $p$ are separated by $A$. Similarly, $B$ separates $p$ and $p^{+}$. The lemma now follows from Lemma \ref{lem:epsilon}.
\end{proof}

\section{Proof of the main result}

\subsection{A short description of the method}
Before going into the technical difficulties, we explain the approach. We set
\begin{equation*}
R_\varepsilon =  \ell_{\varepsilon}+2\ell_0 \leq \log\left(\frac{1}{\varepsilon} \right) +10,
\end{equation*}
where $\ell_{\varepsilon}$ is from Lemma \ref{lem:epsilon}.

We start with a point in the unit tangent bundle and exponentiate in both directions at unit speed. 

By hypothesis, for time less than $R_\varepsilon$ this process produces a (finite) simple arc. We continue to exponentiate beyond this threshold length and stop only if this process produces a point of self-intersection. In fact, with the exception of a measure zero set of initial points, which we may ignore, there are first points of self intersections in either direction and we stop exponentiating exactly there.
 \begin{figure}[h]
\begin{center}
\leavevmode \SetLabels
\L(.35*.77) $\Gamma$\\%
\endSetLabels
\AffixLabels{
\epsfig{file =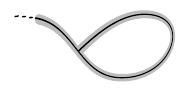,width=5cm,angle=0}}
\vspace{-24pt}
\end{center}
\caption{Finding a small surface around the exponentiated arc $\Gamma$} \label{fig:seetopomethod}
\end{figure}

For the resulting geodesic arc, $\Gamma$, there are only finitely many topological cases to consider and each of them represents a spine of either an embedded hyperbolic pair of pants or a one-holed torus. By considering the possible configurations, one arrives at the list illustrated in Figure \ref{fig:seetopo}. We now note that the third and fourth cases, albeit not isotopic on the surface, are topologically equivalent configurations.
So in fact there are only four cases to consider. We  label the cases from I to V as in the figure, noting that we will not treat Case III.

\begin{figure}[h]
\begin{center}
\leavevmode \SetLabels
\L(.105*.00) I\\%
\L(.295*.00) II\\%
\L(.50*.00) III\\%
\L(.69*.00) IV\\%
\L(.89*.00) V\\%
\endSetLabels
\AffixLabels{
\epsfig{file =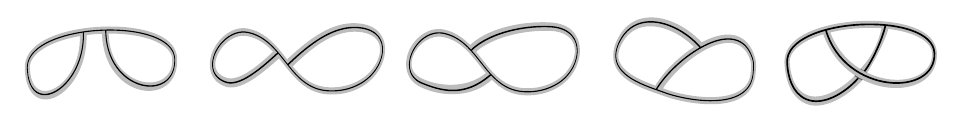,width=15cm,angle=0}}
\vspace{-12pt}
\end{center}
\caption{The topological cases for the spines: Cases III and IV are equivalent} \label{fig:seetopo}
\end{figure}

The next step is to examine the geometry of the resulting spines. In particular, we look at the angles at intersection points and the lengths of the geodesic loops inside the spine. In each possible case, we carefully construct an element of the Birman-Series set that is close to $\Gamma$. This will either be a simple closed geodesic or a bi-infinite simple complete geodesic. We call it the {\it target lamination}. In all cases, the proximity between the point on the surface and the target lamination will be deduced from Lemma \ref{lem:outside} above. 

The proof proceeds by a case analysis on the possible spines.
In each case we construct an embedded broken geodesic with uniformly bounded angles and sufficiently long segments; Lemma~\ref{lem:outside} guarantees embeddedness and
Lemma~\ref{lem:epsilon} gives the required $\varepsilon$--closeness.

\subsection{The case by case analysis}

In each of these cases, we argue using a mixture of topological and geometric arguments and provide a topological description of the simple complete geodesic that is close to the point.

\subsubsection{Case I}

We first consider the topological case as in Figure \ref{fig:seecase01}. 

\begin{figure}[h]
\begin{center}
\leavevmode
\SetLabels
\L(.355*-.25) (i)\\%
\L(.53*-.25) (ii)\\%
\L(.705*-.25) (iii)\\%
\L(.88*-.25) (iv)\\%
\endSetLabels
\AffixLabels{
\epsfig{file =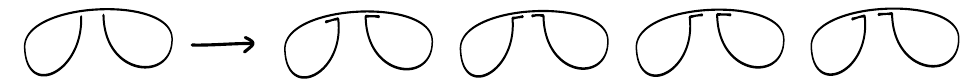,width=15cm,angle=0}}
\vspace{-12pt}
\end{center}
\caption{Case I} \label{fig:seecase01}
\end{figure}

There are 4 sub-cases to consider, depending on the angles of intersection, which we will number (from left to right) by (i) to (iv). At a point of intersection, at least one of the angles is $\geq \frac{\pi}{2}$. We indicate the greater of the two angles on the figures (for example as in Figure \ref{fig:seecase01}) by a short arc making a turn towards the greater angle. Note that the cases are not exclusive, as both angles could be exactly $\frac{\pi}{2}$. Finally note that the two right-most subcases from Figure \ref{fig:seecase01}) are symmetric, and so we will only need to treat one of them. 

{\it Case I(i).} For this configuration we will apply what we will call the {\it loop trick} which we now explain. 

\begin{figure}[b]
\begin{center}
\leavevmode \SetLabels
\L(.17*1.03) $p_2$\\%
\L(.37*.797) $\frac{\theta_2}{2}$\\%
\L(.90*.77) $\gamma_2$\\%
\endSetLabels
\AffixLabels{
\epsfig{file =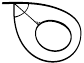,width=4cm,angle=0}}
\vspace{-24pt}
\end{center}
\caption{The loop trick} \label{fig:halfangles}
\end{figure}

\noindent {\it The loop trick.} Note that each intersection point is a basepoint of a geodesic loop. Consider one of these say $\gamma_2$ at $p_2$, with interior angle $\theta_2 \geq \frac{\pi}{2}$. This loop is (freely) homotopic to a unique simple closed geodesic (a core geodesic). Together they bound an embedded cylinder and there is a unique geodesic path inside the cylinder between the basepoint of the loop and perpendicular to the core geodesic. This path splits the angle at the basepoint into two half-angles of equal measure, both $\geq \frac{\pi}{4}$. This is illustrated in Figure \ref{fig:halfangles}.

We do this for both loops (a double-loop trick). We can now identify the target complete simple geodesic that will be $\varepsilon$ close to our point. It is the one in the homotopy class obtained by following the loop one time around and then wrapping around the core geodesic (and this on both sides). This gives a piecewise geodesic with two singular points $p_1$, $p_2$ with angles at least $\frac{\pi}{4}$: in fact the angle $\theta'$ in Figure \ref{fig:looping} is at least $\frac{\pi}{4}$ and it is just a subangle of the full angle $\theta'_2$ at the singular point $p_2$. Our target simple geodesic is the one illustrated in Figure \ref{fig:topo1}. 

\begin{figure}[h]
\rule{0pt}{12pt}
\begin{center}
\leavevmode \SetLabels
\L(.23*.64) $\theta'$\\%
\L(.15*1.03) $p_2$\\%
\L(.205*0.819) $\theta'_2$\\%
\endSetLabels
\AffixLabels{
\epsfig{file =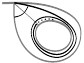,width=4cm,angle=0}}
\vspace{-12pt}
\end{center}
\caption{Wrapping around the core geodesic} \label{fig:looping}
\end{figure}

\begin{figure}[h]
\begin{center}
\leavevmode \SetLabels
\L(.390*.828) $p_1$\\%
\L(.485*.85) $p_2$\\%
\L(.33*.725) $\theta_1$\\%
\L(.565*.78) $\theta_2$\\%
\endSetLabels
\AffixLabels{
\epsfig{file =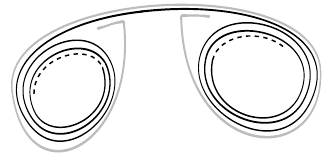,width=6cm,angle=0}}
\vspace{-12pt}
\end{center}
\caption{The target simple complete geodesic for Case I(i)} \label{fig:topo1}
\end{figure}

To show that our choice was judicious, we consider the lift of the piecewise geodesic curve in the universal cover and compare it to the corresponding lift of the target geodesic (see Figure \ref{fig:universal1}). We now observe hat we are exactly in the angle-segment-length conditions of Lemma \ref{lem:outside} and from there we are able to conclude. 

\begin{figure}[h]
\begin{center}
\leavevmode \SetLabels
\L(.43*.5) $p$\\%
\L(.170*.76) $p_1$\\%
\L(.693*1.00) $p_2$\\%
\L(.19*.49) $\theta'_1$\\%
\L(.685*.738) $\theta'_2$\\%
\endSetLabels
\AffixLabels{
\epsfig{file =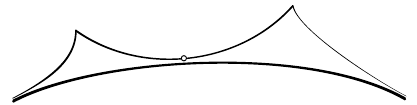,width=10cm,angle=0}}
\vspace{-24pt}
\end{center}
\caption{In the universal cover. The angles satisfy $\frac{\pi}{4} \leq \theta'_k< \theta_k$, $k=1,2$.
} \label{fig:universal1}
\end{figure}


Observe that, in this particular case, we know that the angle is at least $\frac{\pi}{4}$, but there are cases we will consider in the sequel where the weaker angle condition $\theta_k \geq \frac{\pi}{8}$ comes into play.

{\it Case I(ii).} We now consider the next case where we use another trick that we call the {\it insertion trick} which is used when certain geometric elements of the spine are too short. 

{\it The insertion trick.} Consider the bold arc of length labelled $x$ on $\Gamma$ in Figure \ref{fig:sticktrick} connecting the endpoints $p_1$, $p_2$. We denote again by $\gamma_1$, $\gamma_2$ the geodesic loops formed by $\Gamma$ at $p_1$, $p_2$, where we assume w.l.o.g.\ that $\ell(\gamma_1) \geq \ell(\gamma_2)$.

If $x\geq \ell_0$, then we don't modify anything, and, using the previous argument in the universal cover, we lift the spine to a broken geodesic consisting successively of arcs of lengths $x$ and some $R \geq 2R_{\epsilon}$ and having successive angles $\theta_1$, $\theta_2$ at the singular points. The target lamination is the exterior simple closed geodesic of the pair of pants, shown in Figure \ref{fig:sticktrick} (a).

If, however, $x < \ell_0$, then we insert an additional geodesic arc of length $\ell_0$ as in Figure \ref{fig:sticktrick} (b), going from $p_2$ to its endpoint $p'_1$ on $\gamma_1$ in such a way that it becomes the side of a simply connected embedded triangle $p'_1 p_2 p_1$. Note that this is possible because by the length conditions, $\gamma_1$ is (much) longer than $\ell_0$ while side $p_1 p'_1$ of the triangle is shorter than $\ell_0$. Furthermore, since the angle sum of a hyperbolic triangle is $<\pi$, the inserted arc and $\gamma_1$ form an angle $\theta'_1 > \theta_1 \geq \frac{\pi}{2}$ at $p'_1$.

\begin{figure}[h]
\begin{center}
\leavevmode \SetLabels
\L(.42*.64) $x$\\%
\L(.655*.488) $\ell_0$\\%
\L(.880*.67) $\alpha$\\%
\L(.891*.54) $\beta$\\%
\L(.42*-.02) (a)\\%
\L(.65*-.02) (b)\\%
\L(.88*-.02) (c)\\%
\L(.633*.888) $p_1$\\
\L(.672*.888) $p_2$\\
\L(.605*.450) $p'_1$\\
\L(.905*.89) $p_2$\\
\L(.622*.26) $\theta'_1$\\
\L(.088*.40) $\Gamma$\\
\L(.555*.45) $\gamma_1$\\
\L(.758*.70) $\gamma_2$\\
\endSetLabels
\AffixLabels{
\epsfig{file =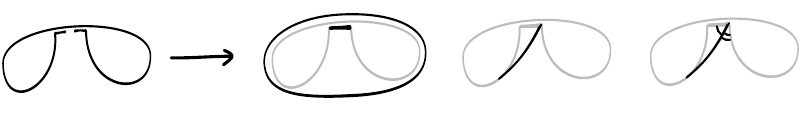,width=15.5cm,angle=0}}
\vspace{-24pt}
\end{center}
\caption{Inserting an arc of length $\ell_0$} \label{fig:sticktrick}
\end{figure}

The additional arc splits the angle $\theta_2$ at $p_2$ into two angles labelled $\alpha$ and $\beta$, where $\alpha$ is the interior angle of the triangle at vertex at $p_2$. As they sum up to at least $\frac{\pi}{2}$, at least one of them has to be at least $\frac{\pi}{4}$.

If $\beta \geq \frac{\pi}{4}$, then the target lamination is the exterior simple closed geodesic and we use the same argument as before, with $\beta$ in the role of $\theta_2$, arc $p_2 p'_1$ in the role of $x$ and $\theta'_1$ in the role of $\theta_1$.

If $\alpha\geq \frac{\pi}{4}$, then $\theta_1 < \pi - \alpha$ and loop $\gamma_1$ forms an angle $\alpha_1 > \alpha \geq \frac{\pi}{4}$ at $p_1$. In this case we do not make use of the inserted arc; instead, we apply the loop trick again. The target lamination is indicated in Figure \ref{fig:targetcaseIii}: 

\begin{figure}[h]
\begin{center}
\leavevmode \SetLabels
\L(.14*.65) $\alpha_1$\\%
\endSetLabels
\AffixLabels{
\epsfig{file =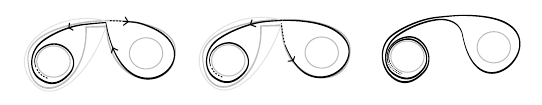,width=16cm,angle=0}}
\vspace{-24pt}
\end{center}
\caption{The two directions and the resulting target lamination for a subcase of Case I(ii). The lamination is also the target lamination for Case I(iv).} \label{fig:targetcaseIii}
\end{figure}

This concludes Case I(ii).

{\it Case I(iii) and (iv).} By symmetry, the two cases are identical and we consider Case (iv). The target lamination is the same as the one indicated on the right side of Figure \ref{fig:targetcaseIii}. As for the angle-segment-length condition: Let again $x$ be the length of the arc on $\Gamma$ between $p_1$ and $p_2$. If $x + \ell(\gamma_1) \geq \ell_0$ no insert is needed. Otherwise, $\gamma_2$ is the longer of the two loops and we insert an arc of length $\ell_0$ going from $p_1$ to its endpoint $p'_2$ on $\gamma_2$ so as to form an embedded triangle $p_1 p'_2 p_2$ and we conclude as in the preceding case.

This concludes Case I.

\subsubsection{Case II}

Again we have four sub-cases to consider, again labelled from (i) to (iv). As before the last two cases are the same by symmetry so we only need to deal with Cases (i) to (iii) as in Figure \ref{fig:seecase02}. 

Note the following difference however: the insert with a single dot here symbolizes an angle of at least $\frac{\pi}{4}$, and the double dot is an angle of at least  $\frac{3\pi}{4}$. As before, the cases overlap, but it only matters that they are exhaustive and we will construct a lamination for each case. 

\begin{figure}[h]
\begin{center}
\leavevmode
\SetLabels
\L(.335*-.05) (i)\\%
\L(.515*-.05) (ii)\\%
\L(.69*-.05) (iii)\\%
\L(.875*-.05) (iv)\\%
\L(.50*.50) $p_1$\\%
\L(.545*.38) $p_2$\\%
\L(.555*.64) $\Gamma$\\%
\endSetLabels
\AffixLabels{
\epsfig{file =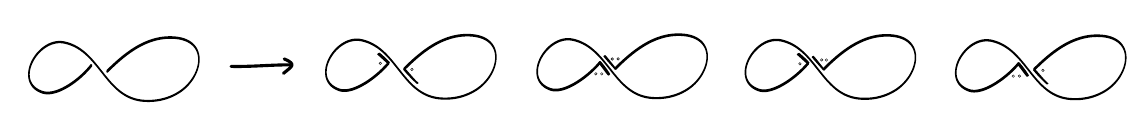,width=16cm,angle=0}}
\vspace{-12pt}
\end{center}
\caption{Case II} \label{fig:seecase02}
\end{figure}

In Case~II(i) (Figure~\ref{fig:seecase02}), we use the \emph{double loop trick}:
we apply the loop trick at both ends and wrap around both core geodesics in the direction of the dotted angles, This produces an embedded broken geodesic $\sigma$ with singular angles at least $\ge \pi/8$.

For Case~II(ii)(Figure~\ref{fig:seecase02}), we use a new trick that we call the {\it angle trick}. Note that $\Gamma$ and the exterior simple closed geodesic $c_3$ of the pair of pants with spine $\Gamma$ bound an annulus. This annulus is shown ``inside out'' in Figure \ref{fig:angletrick}.

{\it The angle trick.} Consider the annulus bounded by a simple geodesic and a broken geodesic as in Figure \ref{fig:angletrick}. Denote by $\alpha,\beta$ and $\vartheta_1$ the interior angles created by a geodesic loop at $p_1$. Since $\alpha$ and $\beta$ are angles of a triangle and the third angle is at least $\frac{3\pi}{4}$, it follows that $\alpha + \beta<\frac{\pi}{4}$ and hence $\vartheta_1 >\frac{\pi}{2}$. A similar argument holds with a loop of angle $\vartheta_2 >\frac{\pi}{2}$ at $p_2$.

\begin{figure}[h]
\begin{center}
\SetLabels
\L(.23*1.00) $p_1$\\%
\L(.23*-0.04) $p_2$\\%
\L(.618*.76) $\alpha$\\%
\L(.835*.76) $\beta$\\%
\L(.715*.77) $\vartheta_1$\\%
\L(.72*1.00) $p_1$\\%
\L(.336*.15) $\Gamma$\\%
\L(.19*.23) $c_3$\\%
\endSetLabels
\leavevmode
\AffixLabels{
\epsfig{file =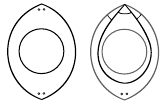,width=5cm,angle=0}}
\vspace{-12pt}
\end{center}
\caption{The angle trick} \label{fig:angletrick}
\end{figure}

Applying the loop trick we find a simple geodesic ray $\eta_1$ with initial point $p_1$ forming an angle $\vartheta'_1 > \frac{1}{2} \vartheta_1 \geq \frac{\pi}{8}$ with $\Gamma$ that spirals towards the exterior simple closed geodesic $c_3$ of the pair of pants as shown in Figure \ref{fig:yingyang}. A similar ray $\eta_2$ emanates under an angle $\vartheta'_2 > \frac{\pi}{8}$ from $p_2$ spiraling in the same sense as $\eta_1$. The two, together with $\Gamma$ form the homotopy class of the target lamination indicated on the right side of Figure \ref{fig:yingyang}.

\begin{figure}[h]
\begin{center}
\SetLabels
\L(.13*.45) $p_1$\\%
\L(.13*.60) $\alpha$\\%
\L(.10*.60) $\beta$\\%
\L(.114*.70) $\vartheta_1$\\%
\L(.375*.62) $\vartheta'_1$\\%
\L(.382*.45) $p_1$\\%
\L(.575*.539) $p_2$\\%
\L(.584*.36) $\vartheta'_2$\\%
\L(.46*.89) $\eta_1$\\%
\L(.69*.89) $\eta_2$\\%
\endSetLabels
\leavevmode
\AffixLabels{
\epsfig{file =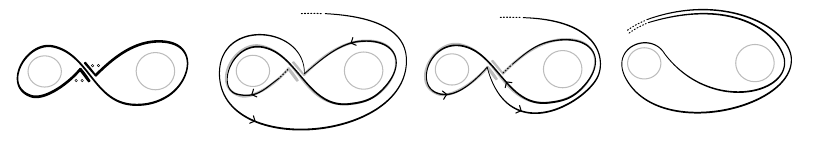,width=16cm,angle=0}}
\vspace{-24pt}
\end{center}
\caption{The lamination for Case II(ii)} \label{fig:yingyang}
\end{figure}

We are now in the final case, Case II(iii), where we use the {\it insertion trick} again if necessary. 

Consider the bold arc of length labelled $x$ in Figure \ref{fig:targetIIiii}. If $x\geq \ell_0$, we don't modify anything, but if not, then we insert an additional geodesic arc of length $\ell_0$ as in the upper right of Figure \ref{fig:targetIIiii}.

\begin{figure}[h]
\begin{center}
\leavevmode \SetLabels
\L(.38*.87) $x$\\%
\L(.82*.87) $\ell_0$\\%
\L(.915*.94) $\theta'_1$\\%
\L(.80*.635) $p_1$\\%
\endSetLabels
\AffixLabels{
\epsfig{file =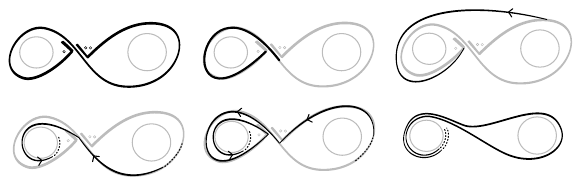,width=14cm,angle=0}}
\vspace{-24pt}
\end{center}
\caption{Construction of the lamination for Case II(iii)} \label{fig:targetIIiii}
\end{figure}

In both cases, we loop in both directions around the interior loop with target lamination as in the bottom right of figure \ref{fig:targetIIiii}. It is again easy to check that the broken geodesic satisfies the angle segment-length condition needed for Lemma \ref{lem:outside}. This concludes Case II.

\subsubsection{Case IV}

We now consider Case~IV from Figure~\ref{fig:seetopo}. The different subcases are portrayed in Figure \ref{fig:seecase04}. Here too, Cases (iii) and (iv) are equivalent and we will discuss Case (iii)

\begin{figure}[h]
\begin{center}
\leavevmode
\SetLabels
\L(.375*-.05) (i)\\%
\L(.553*-.05) (ii)\\%
\L(.725*-.05) (iii)\\%
\L(.90*-.05) (iv)\\%
\endSetLabels
\AffixLabels{
\epsfig{file =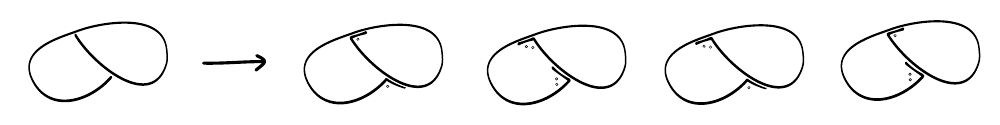,width=15.5cm,angle=0}}
\vspace{-12pt}
\end{center}
\caption{The subcases of Case IV} \label{fig:seecase04}
\end{figure}

For Case IV(i), we use a double loop trick with one interior loop and one exterior loop.

For Case IV(ii), we argue as in Case II(ii) and III(ii) using a combination of the angle trick and the loop trick. The key point is to observe what happens in the two geodesic interior loops based at the singular points. The complement of the geodesic loop is a triangle, one of whose angles is at least $\frac{3\pi}{4}$, hence the two remaining angles sum up to less than $\frac{\pi}{4}$. Therefore, the base angles for the geodesic loops  are both larger than $ \frac{3\pi}{4} - \frac{\pi}{4} = \frac{\pi}{2} $ (see Figure \ref{fig:IViiangle}). This being the case, the spiraling geodesics from the base points, as shown on the left and in the middle of Figure \ref{fig:IVii}  both form an angle of at least $\frac{\pi}{4}$.

\begin{figure}[th]
\begin{center}
\leavevmode \SetLabels
\L(.15*.92) $p_1$\\%
\L(.80*.15) $p_2$\\%
\endSetLabels
\AffixLabels{
\epsfig{file =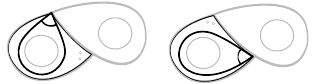,width=8cm,angle=0}}
\vspace{-24pt}
\end{center}
\caption{Both angles are greater than $\frac{\pi}{2}$} \label{fig:IViiangle}
\end{figure}

Using them we can portray the target lamination on the right.

\begin{figure}[th]
\begin{center}
\leavevmode
\AffixLabels{
\epsfig{file =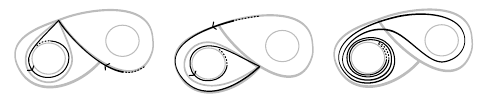,width=14cm,angle=0}}
\vspace{-12pt}
\end{center}
\caption{Case IV(ii)} \label{fig:IVii}
\end{figure}

Again, the resulting broken geodesic which represents the target lamination satisfies the angle-segment-length condition needed for Lemma \ref{lem:outside}.

In Case IV(iii), we argue similarly to Cases II(iii) and III(ii), using the insertion trick if necessary. If the segment labeled by $x$ in Figure \ref{fig:IViiistick} satisfies $x < \ell_0$, then we insert a segment. Note that the segment might not be embedded. What matters is that it is used to create a homotopy class of a target lamination. 

\begin{figure}[h]
\begin{center}
\leavevmode
\SetLabels
\L(.005*.30) $x$\\%
\L(.43*.637) $\ell_0$\\%
\L(.75*.635) $\ell_0$\\%
\L(.09*.85) $p_2$\\%
\L(.431*.85) $p_2$\\%
\L(.754*.85) $p_2$\\%
\L(.193*.28) $p_1$\\%
\L(.62*.25) $p'_2$\\%
\L(.84*.54) $p'_2$\\%
\endSetLabels
\AffixLabels{
\epsfig{file =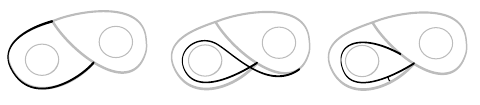,width=14cm,angle=0}}
\vspace{-36pt}
\end{center}
\caption{Case IV(iii)} \label{fig:IViiistick}
\end{figure}

In either case (that is, independently of the length of $x$), the target lamination is the same, and it is portrayed in Figure \ref{fig:IViii}. This completes Case IV.
\begin{figure}[h]
\begin{center}
\leavevmode
\AffixLabels{
\epsfig{file =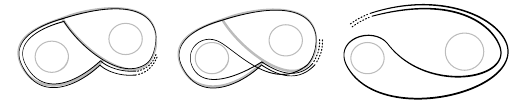,width=14cm,angle=0}}
\vspace{-36pt}
\end{center}
\caption{Case IV(iii)} \label{fig:IViii}
\end{figure}

\subsubsection{Case V}

Finally, we consider Case~V from Figure~\ref{fig:seetopo}. This is the only instance where the initial spine is the spine of a one-holed torus, which makes it seemingly more difficult than the previous cases. Nonetheless, the ideas are all the same, and the only real difficulty is visualizing what is going on. 
\begin{figure}[h]
\begin{center}
\leavevmode
\SetLabels
\L(.37*-.1) $(i)$\\%
\L(.54*-.1) $(ii)$\\%
\L(.703*-.1) $(iii)$\\%
\L(.87*-.1) $(iv)$\\%
\endSetLabels
\AffixLabels{
\epsfig{file =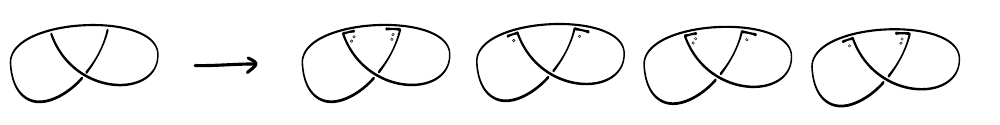,width=16cm,angle=0}}
\vspace{-24pt}
\end{center}
\caption{The subcases of Case V} \label{fig:seecase05}
\end{figure}

We first consider Case V(i). We consider the segment labelled $x$ in Figure \ref{fig:Vi}.

\begin{figure}[h]
\begin{center}
\leavevmode
\SetLabels
\L(.10*.7) $x$\\%
\L(.055*.90) $p_2$\\%
\L(.14*.90) $p_1$\\%
\L(.00*.20) $\gamma_1$\\%
\L(.17*.20) $\gamma_2$\\%
\L(.62*.25) $\theta'_1$\\%
\L(.465*.67) $\theta_2$\\%
\L(.5*.693) $\theta_1$\\%
\L(.667*.635) $\theta'_2$\\%
\L(.63*-.01) $p'_1$\\%
\L(.64*.90) $p_2$\\%
\L(.727*.90) $p_1$\\%
\L(.716*.55) $\ell'_0$\\%
\endSetLabels
\AffixLabels{
\epsfig{file =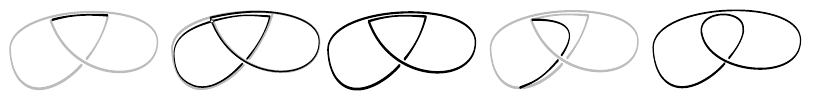,width=15cm,angle=0}}
\vspace{-24pt}
\end{center}
\caption{Case V(i)} \label{fig:Vi}
\end{figure}

One of the two loops labelled $\gamma_1$ and $\gamma_2$ must be longer. Since the configuration is symmetric we may assume w.l.o.g.\  that $\ell(\gamma_1) \geq \ell(\gamma_2)$. Now if $x \leq \ell_0$, we insert an arc $p_2 p'_1$ of length $\ell_0$ so as to obtain, as in earlier cases,  an embedded triangle $p_2 p'_1 p_1$ with side $p'_1 p_1$ of length $\ell'_0 \leq \ell_0$ on $\Gamma$ (Figure \ref{fig:Vi}). In either case, the target lamination is the simple closed geodesic as indicated on the right side of the figure. As for the angle-segment-length condition: since $\theta_1, \theta_2 \geq \frac{3\pi}{4}$ and the angle sum of the triangle is $<\pi$ we have $\theta'_2 \geq \frac{\pi}{2}$.

For Case V(ii) we make use of the homotopy class of the simple closed curve portrayed in Figure \ref{fig:Viiloop}. Note that the loop based at point $p_2$ is indeed positioned relatively to $\Gamma$ like in the figure, meaning that it intersects $\Gamma$ only at $p_2$ and, in particular, that its angle at $p_2$ is at least $\frac{\pi}{4}$. The loop and the simple closed geodesic in its homotopy class bound and annulus shown on the right side in Figure \ref{fig:Viiloop}. 

\begin{figure}[h]
\begin{center}
\leavevmode
\SetLabels
\L(.10*.90) $p_2$\\%
\L(.20*.18) $\Gamma$\\%
\endSetLabels
\AffixLabels{
\epsfig{file =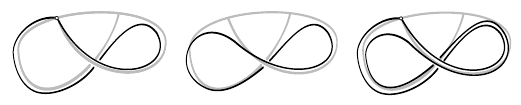,width=14cm,angle=0}}
\vspace{-24pt}
\end{center}
\caption{The loop and corresponding simple closed curve for Case V(ii). The two enclose an annulus shown on the right.}\label{fig:Viiloop}
\end{figure}

Now we make use of this curve to create, via the loop trick, a lamination by winding around it (in both directions) as in Figure \ref{fig:Vii}. 

\begin{figure}[h]
\begin{center}
\leavevmode
\SetLabels
\endSetLabels
\AffixLabels{
\epsfig{file =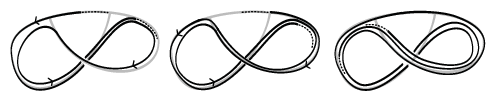,width=14cm,angle=0}}
\vspace{-24pt}
\end{center}
\caption{Case V(ii)} \label{fig:Vii}
\end{figure}

Our final case to consider is Case V(iii). Again, we argue in terms of $x$ from Figure \ref{fig:Viiilong}. 

\begin{figure}[h]
\begin{center}
\leavevmode
\SetLabels
\L(.225*.54) $x$\\%
\endSetLabels
\AffixLabels{
\epsfig{file =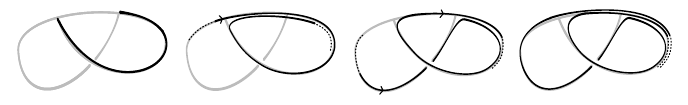,width=14cm,angle=0}}
\vspace{-24pt}
\end{center}
\caption{Case V(iii), when $x \geq \ell_0$} \label{fig:Viiilong}
\end{figure}

If $x \geq \ell_0$, then we construct the target lamination as in the figure. 

If $x < \ell_0$ and loop $\gamma_1$ forms an angle $\theta_1 \geq \frac{\pi}{2}$ at its vertex $p_1$ then we use as target lamination the simple closed geodesic $c_1$ (Figure \ref{fig:Viiishort} (d)) in the free homotopy class of $\gamma_1$.

Assume now that $x < \ell_0$ and $\theta_1 < \frac{\pi}{2}$. In this case we add a segment of length $\ell_0$ as in Figure \ref{fig:Viiishort} (b), (c) forming an embedded triangle $p_1 p'_1 p_2$,  again with side $p_1 p'_1$ on $\Gamma$ shorter than $\ell_0$.

\begin{figure}[h]
\begin{center}
\leavevmode
\SetLabels
\L(.226*.54) $x$\\%
\L(.025*.10) $\gamma_1$\\%
\L(.148*.735) $\theta_1$\\%
\L(.172*.95) $p_1$\\%
\L(.434*.595) $\ell_0$\\%
\L(.61*.62) $\alpha$\\%
\L(.13*-.08) (a)\\%
\L(.373*-.08) (b)\\%
\L(.617*-.08) (c)\\%
\L(.86*-.08) (d)\\%
\L(.55*.24) $p'_1$\\%
\L(.56*.01) $\theta'_1$\\%
\L(.655*.96) $p_1$\\%
\L(.56*.91) $p_2$\\%
\L(.78*.30) $c_1$\\%
\L(.84*.17) $\gamma_1$\\%
\L(.89*.96) $p_1$\\%
\endSetLabels
\AffixLabels{
\epsfig{file =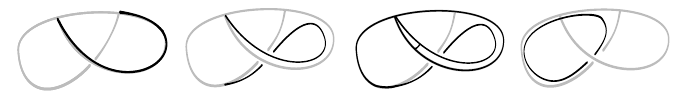,width=15cm,angle=0}}
\vspace{-12pt}
\end{center}
\caption{Case V(iii), when $x < \ell_0$} \label{fig:Viiishort}
\end{figure}
This case is the reason why we imposed an angle of at least $\frac{3\pi}{4}$. We consider the angle $\alpha$ at vertex $p_2$ of the triangle as in the figure. If $\alpha \geq \frac{\pi}{4}$, then we use as target lamination again the simple closed geodesic $c_1$ (Figure \ref{fig:Viiishort} (d)) which is also homotopic to the loop portrayed in (c) that goes from $p_2$ along the inserted segment to $p'_1$ and from there along $\Gamma$ back to $p_2$.

\begin{figure}[h]
\vspace{12pt}
\begin{center}
\leavevmode
\SetLabels
\L(.27*.7) $\beta$\\%
\L(.35*.785) $\beta'$\\%
\L(.205*.6) $\alpha$\\%
\L(.16*1.02) $p_2$\\%
\L(.85*0.80) $\gamma_2$\\%
\L(.70*0.54) $c_2$\\%
\endSetLabels
\AffixLabels{
\epsfig{file =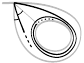,width=4.5cm,angle=0}}
\vspace{-12pt}
\end{center}
\caption{Case V(iii), when $\alpha < \frac{\pi}{4}$} \label{fig:angleV}
\end{figure}

If,  however, $\alpha < \frac{\pi}{4}$ we use a version of the loop trick in the annulus bordered by $\gamma_2$ and the simple closed geodesic $c_2$ in its free homotopy class, but where we have to pay extra attention to angles. Consider Figure \ref{fig:angleV}. The inserted arc $p'_1p_2$
makes an angle of at least $\frac{\pi}{8}$ with the perpendicular from $p_2$ to $c_2$ (in the figure the angle is marked $\beta$). This is because
\begin{equation*}
\beta \geq  \frac{\frac{3\pi}{4}}{2} -\alpha > \frac{\pi}{8}.
\end{equation*}
The inserted segment $p_2p'_1$ at angle $\alpha$ shown in Figure \ref{fig:angleV} intersects $c_0$. By symmetry, the spiraling geodesic ray on the right hand side makes therefore an angle $\beta' > \beta \geq \frac{\pi}{8}$ with the perpendicular. We can now attach it to the segment $p_2p'_1$ so as to get the homotopy class of the target lamination which is the same as in Case V(iii) when $x \geq \ell_0$ (Figure \ref{fig:Viiilong}). The lower bound on $\beta$ allows us to satisfy the angle-segment-length condition needed for Lemma~\ref{lem:broken} with this lamination.

This completes the proof in Case~V and hence the proof of Theorem~\ref{thm:main}.

\section{Corollaries}
Here we deduce two results from our main theorem.

\subsection{Measuring the Birman-Series set}

The set of all simple and complete geodesics is a nowhere dense set on a hyperbolic surface, and previous results \cite{Buser-Parlier} have shown that the non-density can be quantified in terms of topology. Here we use our main theorem to prove the following, which uses the existence of surfaces with long systoles, first exhibited by Buser-Sarnak \cite{Buser-Sarnak}. 

\begin{corollary}\label{cor:dense}
There exists a family of closed hyperbolic surfaces, $X_{g_k}$, with a $\delta(g_k)$-dense Birman-Series set with 
$$
\delta(g_k) = e^{10}\, g_k^{-2/3}.
$$
\end{corollary}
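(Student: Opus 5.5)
The plan is to apply Theorem \ref{thm:main} on closed surfaces with large systole. By Buser--Sarnak \cite{Buser-Sarnak}, there is a sequence of closed hyperbolic surfaces $X_{g_k}$ of genus $g_k\to\infty$ whose systole satisfies $\sys(X_{g_k}) \geq \frac{4}{3}\log(g_k) - c$ for a universal constant $c$. For the clean statement with $e^{10}$ we will want the form $\sys(X_{g_k}) \geq \frac43 \log g_k$ (possibly after passing to a subsequence or adjusting the constant). The key observation is that a geodesic arc of length less than $\sys(X)/2$ is automatically simple. Indeed, if such an arc had a self-intersection, the subarc between the two passages through that point would be a geodesic loop of length less than $\sys(X)/2$. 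This loop is homotopically nontrivial, since geodesic loops in a hyperbolic surface are never nullhomotopic. Its free homotopy class would then contain a closed geodesic of length at most the loop's length, which is strictly less than the systole, a contradiction. In fact any geodesic arc of length $<\sys(X)$ is simple, and this weaker requirement is all we need.

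Next we fix an arbitrary point $p\in X_{g_k}$ and an arbitrary unit tangent vector at $p$, and exponentiate in both directions to obtain a geodesic arc of total length $L$ with midpoint $p$. We choose $\varepsilon$ so that $L = 2\log(1/\varepsilon)+20$ is just below $\sys(X_{g_k})$. Setting $2\log(1/\varepsilon)+20 = \frac43\log g_k$ gives
$$\log(1/\varepsilon) = \tfrac23 \log g_k - 10,$$
that is, $\varepsilon = e^{10} g_k^{-2/3}$. The arc is then simple by the previous paragraph. Theorem \ref{thm:main} therefore gives a geodesic lamination, hence a simple complete geodesic in it, within distance $\varepsilon$ of $p$. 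Since $p$ was arbitrary, the Birman--Series set is $\delta(g_k)$-dense with $\delta(g_k)=e^{10}g_k^{-2/3}$.

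Two things remain to check. First, the hypothesis $\varepsilon<\frac1{10}$ of Theorem \ref{thm:main} holds once $g_k$ is large enough, so we pass to a tail of the sequence. Second, if the arc length equals the systole exactly, we shrink it slightly or use a strict inequality; the theorem is monotone in the length, so this costs nothing.

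The main obstacle is matching the constant. Buser--Sarnak give the systole bound only up to an additive constant, and the stated $\delta(g_k)$ requires $\sys \ge \frac43\log g_k$ on the nose. I would first try to cite the explicit form of the bound for congruence covers of arithmetic surfaces, $\sys \geq \frac43\log g - c_0$. I would then absorb $c_0$, either by noting that the theorem's constant $20$ is not sharp, or by using the fact that any arc shorter than the systole suffices, rather than half of it, which buys room. If needed, I would state the result with $e^{10+c_0/2}$, consistent with the word ``roughly'' in Corollary \ref{cor:bs}.
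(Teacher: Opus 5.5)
\textbf{There is a gap in the constant $e^{10}$.} Your overall strategy matches the paper's. Every geodesic arc of length less than $\sys(X)$ is embedded; the paper gets this from injectivity radius $\geq \sys/2$ at every point, and your geodesic-loop argument is equally valid. You then solve $2\log(1/\varepsilon)+20=\frac43\log g_k$.

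The gap is the point you flagged but did not resolve. The corollary asserts the specific constant $e^{10}$, and that needs a family with $\sys(X_{g_k})\geq \frac43\log g_k$ with no additive loss. A bound of the form $\sys \geq \frac43\log g_k - c$ only gives $\delta(g_k)=e^{10+c/2}g_k^{-2/3}$, which is a weaker statement. Neither of your proposed ways of absorbing $c_0$ works:
\begin{itemize}
\item Theorem~\ref{thm:main} is proved with the constant $20$. Appealing to its non-sharpness means reproving it with a smaller constant, which you have not done.
\item The ``full systole rather than half'' room has already been spent in your own computation. You took $L$ just below $\sys$, not $\sys/2$, so nothing more can be gained there.
\end{itemize}
Your fallback with $e^{10+c_0/2}$ changes the statement rather than proving it.

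\textbf{The missing ingredient is a sharper citation.} Katz--Schaps--Vishne \cite{KSV} showed that principal congruence Hurwitz surfaces satisfy $\sys(X_{g_k})\geq\frac43\log g_k$ for $g_k\geq 65$, with no additive constant. This is the family the paper uses. With it, your computation gives $\varepsilon=e^{10}g_k^{-2/3}$ directly, and the rest of your argument goes through.

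Two smaller points:
\begin{itemize}
\item Passing to a tail of the sequence so that $\varepsilon<\frac1{10}$, i.e.\ $g_k>(10e^{10})^{3/2}$, is a legitimate point that the paper's proof skips.
\item In the boundary case, shrinking the arc does cost something. It gives distance $\le\varepsilon'$ for every $\varepsilon'>\varepsilon$, and you then need the Birman--Series set to be closed to conclude distance $\le\varepsilon$.
\end{itemize}
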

\begin{proof}
There are by now many constructions of surfaces with $4/3 \log(g)$ systole growth. Katz--Schaps--Vishne \cite{KSV} proved that principal congruence Hurwitz surfaces, for a sequence of genera $g_k \to \infty$ satisfy, for $g_k\geq 65$ that 
$$
\sys(X_{g_k}) \geq \frac{4}{3} \log(g_k). 
$$
Now on these surfaces, since every point has injectivity radius at least $\sys / 2$, this means every point $p$ on the surface is the midpoint of an embedded simple geodesic arc of length $\frac{4}{3} \log(g_k)$. 

We can now apply the main theorem which says that $p$ is distance at most $\varepsilon$ from a geodesic lamination where $\varepsilon$ satisfies 
\[
 \frac{4}{3} \log(g_k) \leq 2 \log\left(\frac{1}{\varepsilon} \right) +20
\]
and hence
\[
\varepsilon \le e^{10}\, g_k^{-2/3}.
\]
\end{proof}
\begin{remark}
Note that any construction which uses surfaces of systole roughly $4/3 \log(g)$ will result in a density bound on the order of $g^{-2/3}$. These families of surfaces are only known to exist for certain genera. It is known that there do exist families of surfaces in any genus with logarithmic growth, but the constant in front is no longer $4/3$. More precisely, Katz--Sabourau \cite{Katz-Sabourau} constructed families with $19/120 \log(g)$ growth and Liu--Petri \cite{Liu-Petri} exhibited the existence of surfaces with growth on the order of $2/9 \log(g)$ via random methods. The Liu-Petri result implies the existence of families in every genus with density on the order of $g^{-1/9}$. 
\end{remark}

\subsection{Drawing the Birman-Series set}

This second consequence will be a bit more descriptive, but the general idea is the following. In a previous paper, we provided illustrations of the Birman-Series set \cite{Buser-Parlier}. To make these required checking the simplicity of geodesic arcs for arcs of sufficiently long length. As in all things algorithmic, this can only be checked for finite length arcs. For a genuine lamination, any of its finite length subarcs will be simple. It follows from standard hyperbolic geometry, that the full Birman-Series set passing through a given region lies in a small neighborhood of the set of all finite length simple arcs of sufficiently long length that also pass through that same region. And in fact, this is the spirit of the original proof of this phenomenon \cite{Birman-Series}. What isn't immediate is that a finite length simple arc actually corresponds to a genuine lamination. Theorem \ref{thm:main} provides that missing piece: if a geodesic arc of sufficiently long length is embedded, then its proximity at its core to a geodesic lamination is guaranteed uniquely by length of its "tails".

{\it Addresses:}\\
EPFL, Lausanne, Switzerland\\
University of Fribourg, Switzerland

{\it Emails:}\\
peter.buser@epfl.ch\\
hugo.parlier@unifr.ch

\end{document}